\theoremstyle{plain}
\newtheorem{theorem}{Theorem}[section]
\newtheorem{lemma}[theorem]{Lemma}
\theoremstyle{definition}
\theoremstyle{remark}
\newtheorem{remark}{Remark}[section]
\numberwithin{equation}{section}
\theoremstyle{example}
\newcounter{marnote}
\def\o{\overline}
\def\ma{Monge-Amp\`{e}re }
\def\disp{\displaystyle} 
\def\b{\backslash}\def\dint{\displaystyle\int}
\begin{document}

\begin{center}{\Large Necessary and sufficient conditions on existence of radial solutions for exterior Dirichlet problem of fully nonlinear elliptic equations}
\end{center}
\centerline{\large  Limei Dai \quad Jiguang Bao \quad Bo Wang }
\vspace{5mm}

\begin{minipage}{140mm}{\footnotesize {\small\bf Abstract:} {\small
In this paper, we study the exterior Dirichlet problem for the fully nonlinear elliptic equation $f(\lambda(D^{2}u))=1$. We obtain the necessary and sufficient conditions of existence of radial solutions with prescribed asymptotic behavior at infinity.}}

 {\small{\bf Keywords:} fully nonlinear elliptic equation; radial solutions; necessary and sufficient conditions; exterior Dirichlet problem; asymptotic behavior}

{\small{\bf 2010 MSC.} 35J60; 35B40; 35B07}

\end{minipage}

\section{Introduction}

In this paper, we study the exterior Dirichlet problem of the fully nonlinear elliptic equation
\begin{align}
\label{ns1}f(\lambda(D^2u))=&1\ \ \mbox{in}\ \ \mathbb{R}^n\b\o{B_1(0)},\\
\label{ns2}u=&b\ \ \mbox{on}\ \ \partial B_1(0).
\end{align}
where $B_1(0)=\{x\in \mathbb{R}^n:|x|<1\}$ is the unit ball in $\mathbb{R}^n$, $b$ is a constant, and $f(\lambda)$ is a given symmetric function of the eigenvalues $\lambda=(\lambda_1,\dots,\lambda_n)$ of the Hessian matrix $D^2u$. We study $f$ in an open convex symmetric cone $\Gamma\subset \mathbb{R}^n$ with vertex at the origin,
\begin{equation}\label{Gamma}\{\lambda\in \mathbb{R}^n|\lambda_j>0,1\leq j\leq n\}\subset\Gamma\subset\{\lambda\in \mathbb{R}^n|\sum_{j=1}^{n}\lambda_j>0,1\leq j\leq n\}.\end{equation}
Suppose that $f\in C^{\infty}(\Gamma)\cap C^0(\o{\Gamma})$ is concave and symmetric in $\lambda_j$,
\begin{equation}\label{f}f>0\ \mbox{in}\ \Gamma,\ f=0\ \mbox{on}\ \partial\Gamma;\ f_{\lambda_j}>0\ \mbox{in}\ \Gamma\ \forall 1\leq j\leq n.\end{equation}
Let $\sigma_k(\lambda)=\sum_{1\leq
i_1<\cdots<i_k\leq
n}\lambda_{i_1}\cdots\lambda_{i_k},k=1,2,\cdots,n,$ and $\Gamma_k=\{\lambda\in {\bf{\mathbb{R}}}^n|\sigma_j(\lambda)>0,j=1,2,\cdots,k\}$. Then $(f,\Gamma)=(\sigma_k^{\frac{1}{k}},\Gamma_k)$ and $(f,\Gamma)=((\frac{\sigma_k}{\sigma_l})^{\frac{1}{k-l}},\Gamma_k), 1\leq l<k\leq n$ are the special cases of $(f,\Gamma).$ In particular, if $k=n$, $(f,\Gamma)=(\sigma_k^{\frac{1}{k}},\Gamma_k)$ corresponds to the \ma operator.


A classical theorem for \ma equation states that any classical convex solution of
\begin{equation*}\label{ma}\det D^2u=1\ \mbox{in}\ \mathbb{R}^n\end{equation*}
must be a quadratic polynomial. This theorem was established by J\"{o}rgens \cite{j} ($n=2$), Calabi \cite{c} ($n\leq 5$) and Pogorelov \cite{p} ($n\geq 2$). Later Cheng-Yau \cite{cy} proved the J\"{o}rgens-Calabi-Pogorelov theorem by the simpler and more analytical way along the lines of affine geometry. This result for viscosity solutions was extended by Caffarelli \cite{ca}. Jost-Xin \cite{jx} also gave another proof of this theorem. However, Trudinger-Wang \cite{tw1} proved that if $D$ is an open convex subset in $\mathbb{R}^n$ and $u$ is a convex $C^2$ solution to $\det D^2u=1$ in $D$ with $\lim_{x\to\partial D}u(x)=\infty,$ then $D=\mathbb{R}^n$.

In 2003, Caffarelli-Li \cite{cl} made an extension of the J\"{o}rgens-Calabi-Pogorelov theorem to exterior domains. Moreover, Caffarelli-Li \cite{cl} also established the existence of solutions with asymptotic behavior at infinity to the exterior Dirichlet problem of \ma equations.
 \begin{theorem}\label{thmex}(\cite{cl}) Let $\Omega$ be a smooth, bounded, strictly convex domain in $\mathbb{R}^n, n\geq 3$ and $\phi\in C^2(\partial \Omega)$. Then for any symmetric positive definite matrix $A\in\mathbb{R}^{n\times n}$, $\hat{b}\in\mathbb{R}^{n}$, there exists a constant $c_1=c_1(n,\Omega,\phi,\hat{b},A)$ such that for any $\hat{c}>c_1$, there exists a unique solution $u\in C^{\infty}(\mathbb{R}^n\b\o{\Omega})\cap C^0(\mathbb{R}^n\b\Omega)$ satisfying
 $$\begin{cases}
 \det D^2u=1\ \mbox{in}\ \mathbb{R}^n\b\o{\Omega},\\
 u=\phi\ \mbox{on}\ \partial\Omega,\\
 \disp\lim_{|x|\to\infty}|x|^{n-2}\left|u(x)-\left(\frac{1}{2}x^{T}Ax+\hat{b}\cdot x+\hat{c}\right)\right|=0.
 \end{cases}$$
\end{theorem}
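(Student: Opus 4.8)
The plan is to normalize the data by a linear change of variables, manufacture explicit radial sub- and supersolutions from the ordinary differential equation satisfied by rotationally symmetric solutions, produce a global solution by Perron's method together with the interior regularity theory for \ma, and finally pin down the behavior at infinity by trapping $u$ between two radial profiles that share the prescribed constant $\hat c$. First I would normalize. Since interior estimates give $D^2u\to A$ as $|x|\to\infty$, consistency with $\det D^2u=1$ forces $\det A=1$; diagonalizing $A=O^{T}DO$ and setting $T=O^{T}D^{-1/2}$ produces a unimodular map ($\det T=1$) with $T^{T}AT=I$, so the substitution $v(y)=u(Ty)$ preserves $\det D^2v=1$ and turns $\tfrac12x^{T}Ax$ into $\tfrac12|y|^2$. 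Subtracting $\hat b\cdot x$, which leaves the Hessian unchanged, I may assume $A=I$, $\hat b=0$, so the target profile is $P(x)=\tfrac12|x|^2+\hat c$, with $\Omega,\phi$ replaced by a new smooth strictly convex domain and $C^2$ datum.

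For the model solutions, note that for radial $w=w(r)$, $r=|x|$, the Hessian has eigenvalues $w''$ and $w'/r$ (the latter with multiplicity $n-1$), so $\det D^2w=1$ reads $w''(w'/r)^{n-1}=1$. Writing $p=w'$ this integrates to $p(r)=(r^n+a)^{1/n}$ with a free parameter $a$, and a second integration gives, as $r\to\infty$,
\[
w_a(r)=\tfrac12 r^2+\mu(a)-\frac{a}{n(n-2)}\,r^{-(n-2)}+o\!\left(r^{-(n-2)}\right),
\]
where $\mu(a)$ and the coefficient depend monotonically on $a$; these $w_a$ are smooth convex solutions on exteriors of balls and are the model profiles carrying exactly the prescribed asymptotics. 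Using the strict convexity of $\Omega$ and $\phi\in C^2$, I would then build a generalized subsolution as the upper envelope of supporting paraboloids below $\phi$ together with a shifted radial solution whose value at infinity is $\tfrac12|x|^2+\hat c$; this convex viscosity subsolution satisfies $\underline u\le\phi$ on $\partial\Omega$ and $\underline u\le P+O(|x|^{2-n})$. The geometric requirement that these pieces fit below $P$ and above the boundary data is precisely what determines the threshold $c_1=c_1(n,\Omega,\phi)$: the construction succeeds exactly when $\hat c>c_1$. A large paraboloid together with a radial $w_{a'}$ serves as supersolution.

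With admissible class $\mathcal S=\{v\ \text{convex},\ \det D^2v\ge1\ \text{in}\ \mathbb R^n\b\o\Omega,\ v\le\phi\ \text{on}\ \partial\Omega,\ v\le P+C|x|^{2-n}\}$, set $u=\sup_{v\in\mathcal S}v$. Standard Perron arguments, using the comparison principle for \ma and the sub/supersolutions above, show $u$ is a convex viscosity solution of $\det D^2u=1$, and the interior estimates of Pogorelov and Caffarelli upgrade $u$ to $C^{\infty}(\mathbb R^n\b\o\Omega)$, while the lower barrier forces $u=\phi$ on $\partial\Omega$ with continuity up to the boundary. Uniqueness is then immediate: if $u_1,u_2$ are two solutions, they agree on $\partial\Omega$ and satisfy $u_1-u_2\to0$ at infinity, so comparison on the annuli $(\mathbb R^n\b\o\Omega)\cap B_R$ with $R\to\infty$ gives $u_1\equiv u_2$.

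The hard part is the sharp asymptotics $\disp\lim_{|x|\to\infty}|x|^{n-2}|u(x)-P(x)|=0$. Here I would invoke the exterior asymptotic-expansion analysis underlying the extension of the J\"orgens--Calabi--Pogorelov theorem: any convex solution of $\det D^2u=1$ outside a compact set with $u=\tfrac12|x|^2+O(|x|)$ is asymptotic to a quadratic polynomial with a correction of order $|x|^{2-n}$ when $n\ge3$. The delicate issue is to identify the additive constant with the prescribed $\hat c$ and to obtain the exact rate; I would do so by trapping $u$ between $w_{a_-}(|x-z|)+\mathrm{const}$ and $w_{a_+}(|x-z|)+\mathrm{const}$, chosen so that both radial barriers carry the leading term $\tfrac12|x|^2$ and the \emph{same} constant $\hat c$ while pinching the $|x|^{2-n}$ tail, then applying comparison on large annuli and letting the radii tend to infinity. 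This is the main obstacle: existence and boundary attainment are a fairly routine Perron/barrier argument once the radial solutions are available, but controlling the precise constant at infinity—and thereby explaining why solvability requires $\hat c>c_1$—is the analytic heart of the theorem and is exactly where the exterior Liouville-type expansion is indispensable.
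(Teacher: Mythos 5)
Note first that the paper contains no proof of this statement: Theorem \ref{thmex} is quoted as background from Caffarelli--Li \cite{cl}, so your proposal can only be measured against the argument in that reference. Measured that way, your outline is essentially the known route: reduction to $A=I$, $\hat{b}=0$ by a unimodular linear change of variables, the radial solutions with $(w')^{n}=r^{n}+a$ as barriers, Perron's method over a class of convex subsolutions, comparison for uniqueness, and a squeeze at infinity to capture the constant $\hat{c}$. You also correctly observe that consistency forces $\det A=1$; this hypothesis is explicit in \cite{cl} but has been dropped in the paper's transcription of the theorem, so the statement as reproduced here is not literally correct, and your normalization computation is exactly the reason why.

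Two steps in your sketch are genuine gaps rather than compression. First, your admissible class imposes $v\le P+C|x|^{2-n}$: if $C$ is a fixed constant the class is awkward to work with (you must verify it survives the Perron ``bump'' modification), and if $C$ is allowed to depend on $v$ then the supremum inherits no rate at all. The clean formulation, and the one in \cite{cl}, puts only $\limsup_{|x|\to\infty}(v-P)\le 0$ into the class; the lower bound with rate $|x|^{2-n}$ is obtained by exhibiting a single subsolution in the class whose asymptotic constant is exactly $\hat{c}$ --- this is precisely where $\hat{c}>c_1$ enters and where $c_1$ is defined --- and the matching upper bound comes from comparison with an explicit supersolution. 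Once this two-sided squeeze is in place, the exterior Liouville-type expansion theorem is not needed for existence; your plan treats that theorem as indispensable, which inverts the logic of the proof (in \cite{cl} the expansion theorem is a separate result, not an ingredient of the existence argument). Second, ``interior estimates of Pogorelov and Caffarelli upgrade $u$ to $C^{\infty}$'' is not automatic: for $n\ge 3$ convex viscosity solutions of $\det D^2u=1$ need not be smooth (Pogorelov's counterexamples degenerate along line segments), so one must first prove the Perron solution is strictly convex --- in \cite{cl} this uses the quadratic growth at infinity to exclude affine pieces in the graph --- and only then does Caffarelli's regularity theory apply. These two points are where the actual work of the theorem lies; the remainder of your outline is a faithful roadmap of the proof in \cite{cl}.
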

For $n=2$, the existence of solutions to the exterior Dirichlet problem for \ma equation $\det D^2u=1$ was established by Bao-Li \cite{bl-1} using the Perron method. Ferrer-Mart\'{i}nez-Mil\'{a}n \cite{FMM1, FMM2} also studied the similar problems by using the complex variable method. We can also refer to Delano\"{e} \cite{De}. Bao-Li-Zhang \cite{blz} proved the existence of solutions to the exterior Dirichlet problem for $\det D^2u=f$ with $f$ being a perturbation of $1$ at infinity for $n\geq 2$. Ju-Bao \cite{jb} obtained the existence of exterior solutions with $f$ being a preturbation of $f_0(|x|)$ at infinity for $n\geq 3$. For the fully nonlinear elliptic equations \eqref{ns1}, Li-Bao \cite{bl-2} obtained the existence of solutions of the exterior Dirichlet problem.

The constant $c_1$ in Theorem \ref{thmex} plays an important role in the existence and nonexistence of solutions to the exterior Dirichlet problem. Wang-Bao \cite{wb} first studied the constant among the radially symmetric solutions to Hessian equations $\sigma_k(\lambda(D^2u))=1$.
\begin{theorem}\label{thmns}(\cite{wb}) Let $n\geq 3, 2\leq k\leq n$ and $a=(\frac{1}{C_n^k})^{1/k}$. There exists a unique radially symmetric solution $u\in C^{2}(\mathbb{R}^n\b\o{B_1(0)})\cap C^{1}(\mathbb{R}^n\b B_1(0))$ satisfying
\begin{alignat}{2}
\label{r1}\sigma_k(\lambda(D^2u))=&1\ \ \mbox{in}~\mathbb{R}^{n}\b\o{B_1(0)},\\
\label{r2} u=&\hat{a}\ \ \mbox{on}~\partial B_1(0),\\
\label{r3} u=&\frac{a}{2}|x|^2+\hat{c}+O(|x|^{2-n})\ \ \mbox{as}~|x|\to\infty,
\end{alignat}
if and only if $\hat{c}\geq C^*=\hat{a}-\frac{a}{2}+a\int_1^{\infty}s((1-s^{-n})^{\frac{1}{k}}-1)ds.$
 \end{theorem}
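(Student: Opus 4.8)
The plan is to reduce the fully nonlinear PDE to a first-order ODE for the radial profile, integrate it in closed form, and then read off both the admissibility constraint and the asymptotic constant $\hat{c}$ as functions of a single integration constant; the equivalence and uniqueness will then drop out of a monotonicity argument. Writing $u=u(r)$ with $r=|x|$, the Hessian of a radial function has eigenvalues $u''(r)$ (simple) and $u'(r)/r$ (multiplicity $n-1$). Setting $w=u'(r)/r$, equation \eqref{r1} becomes $\binom{n-1}{k}w^{k}+\binom{n-1}{k-1}u''\,w^{k-1}=1$. Substituting $u''=w+rw'$ and using Pascal's identity $\binom{n-1}{k}+\binom{n-1}{k-1}=C_{n}^{k}$, the $w^{k}$ terms combine, and after setting $v=w^{k}$ (so $rw'w^{k-1}=\frac{r}{k}v'$) the equation collapses to the linear first-order ODE $v+\frac{r}{n}v'=a^{k}$, where $a^{k}=1/C_{n}^{k}$; the clean coefficient is exactly the identity $\binom{n-1}{k-1}/k=C_{n}^{k}/n$.

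Next I would solve this ODE. Rewriting it as $(r^{n}v)'=n\,a^{k}r^{n-1}$ and integrating gives $v=a^{k}+Cr^{-n}$ for an arbitrary constant $C$, that is $u'(r)=r\,(a^{k}+Cr^{-n})^{1/k}$. Since the general solution of the linear equation carries exactly one free constant, and $u$ is then recovered by one further integration pinned down by $u(1)=\hat{a}$ from \eqref{r2}, this one-parameter family is \emph{exhaustive} among admissible radial solutions. Because $v\to a^{k}$, we have $u'/r\to a$, so every member automatically has the quadratic growth $\tfrac{a}{2}|x|^{2}$ demanded by \eqref{r3}, and $C$ governs the subleading term.

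The admissibility step is where I expect to be most careful, and it is the main obstacle. For \eqref{r1} to hold with $\lambda(D^{2}u)\in\Gamma_{k}$ on $r>1$, I need $w=u'/r>0$ there. Conversely, I would show that $w>0$ together with $\sigma_{k}=1$ already forces $\lambda\in\Gamma_{k}$ when $k\geq 2$: writing $\sigma_{j}$ of the eigenvalue configuration and factoring, each condition $\sigma_{j}>0$ reduces (for $w>0$) to $u''>-\frac{n-j}{j}w$, whose strongest case is the binding inequality $\sigma_{1}=u''+(n-1)w>0$, which I can verify directly from $\sigma_{k}=1$. Thus admissibility is equivalent to $a^{k}+Cr^{-n}>0$ for all $r>1$, i.e. to $C\geq -a^{k}$. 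The endpoint $C=-a^{k}$ gives $w(1)=0$, so $u''$ blows up at $r=1$ while $u$ remains $C^{1}$ up to the boundary, matching the regularity class in the statement; any smaller $C$ makes $v$ vanish at some $r_{0}=(|C|/a^{k})^{1/n}>1$, so no admissible solution reaches $\partial B_{1}$.

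Finally I would integrate $u'$ from $1$ with $u(1)=\hat{a}$ and extract $\hat{c}$. Writing $\frac{a}{2}r^{2}=\frac{a}{2}+\int_{1}^{r}as\,ds$, the difference telescopes to $\hat{c}=\hat{a}-\frac{a}{2}+\int_{1}^{\infty}s\big[(a^{k}+Cs^{-n})^{1/k}-a\big]\,ds$, the integral converging precisely because $n\geq 3$ (the integrand decays like $s^{1-n}$). Differentiating under the integral sign shows $\hat{c}$ is strictly increasing in $C$; evaluating at the admissible endpoint $C=-a^{k}$ yields exactly $\hat{c}=\hat{a}-\frac{a}{2}+a\int_{1}^{\infty}s\big[(1-s^{-n})^{1/k}-1\big]\,ds=C^{*}$, while $\hat{c}\to+\infty$ as $C\to+\infty$ by monotone convergence. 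Hence $C\mapsto\hat{c}$ is a strictly increasing bijection of $[-a^{k},\infty)$ onto $[C^{*},\infty)$: this gives existence and uniqueness for every $\hat{c}\geq C^{*}$ (sufficiency) and, since every admissible radial solution lies in the family with some $C\geq -a^{k}$, nonexistence when $\hat{c}<C^{*}$ (necessity).
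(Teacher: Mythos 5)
Your overall strategy is sound and, in fact, it is essentially the argument of the cited source \cite{wb}: the paper under review never proves Theorem \ref{thmns} at all (it is quoted from \cite{wb} as background), and instead proves the generalization Theorem \ref{thmns1} by a non-explicit version of the same scheme — your closed-form family $u'(r)=r\bigl(a^{k}+Cr^{-n}\bigr)^{1/k}$ is replaced there by the implicitly defined function $g$ of Lemma \ref{lemma1} and the ODE \eqref{W} for $W=u'/r$, which cannot be integrated explicitly for general $(f,\Gamma)$. Your route buys explicitness (an exact solution family, the threshold $C^{*}$ computed directly, the bijection $C\mapsto\hat c$ read off by differentiating under the integral), while the paper's machinery buys generality. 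The reduction to $v+\frac{r}{n}v'=a^{k}$, the exhaustiveness of the one-parameter family, the convergence of the integrals for $n\geq3$, the boundary regularity discussion at $C=-a^{k}$, and the monotone-bijection conclusion are all correct.

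There is, however, one step whose stated logic is inverted, and one assumed fact left unproved. For $\lambda=(u'',w,\dots,w)$ with $w>0$ you correctly reduce $\sigma_{j}>0$ to
\begin{equation*}
u''>-\frac{n-j}{j}\,w ,
\end{equation*}
but since $\frac{n-j}{j}$ is \emph{decreasing} in $j$, these thresholds increase with $j$: the condition for $j=1$ is the \emph{weakest} of the lot and the condition for $j=k$ is the \emph{strongest}, exactly opposite to your claim. As written — ``verify the binding case $\sigma_{1}>0$ from $\sigma_{k}=1$'' — the argument establishes only the weakest inequality and says nothing about $\sigma_{2},\dots,\sigma_{k-1}$. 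The repair is immediate: the strongest condition, $j=k$, is precisely $\sigma_{k}=1>0$, which holds by the equation itself, and it implies all the others; so the conclusion (given $\sigma_{k}=1$, admissibility on $r>1$ is equivalent to $w>0$) stands. Separately, the forward implication you assert without proof — that $\lambda(D^{2}u)\in\Gamma_{k}$ forces $w=u'/r>0$ — is needed for the necessity direction (otherwise an admissible radial solution might escape your family); this is exactly Lemma \ref{lem-w} of the paper, also quoted from \cite{wb}, and for $k\geq2$ it has a two-line proof: if $\gamma\leq0$ in $\lambda=(\beta,\gamma,\dots,\gamma)$, then $\sigma_{1}>0$ gives $\beta>-(n-1)\gamma$, whence
\begin{equation*}
\sigma_{2}=(n-1)\beta\gamma+\binom{n-1}{2}\gamma^{2}\leq-(n-1)^{2}\gamma^{2}+\tfrac{(n-1)(n-2)}{2}\gamma^{2}=-\tfrac{n(n-1)}{2}\gamma^{2}\leq0,
\end{equation*}
contradicting $\lambda\in\Gamma_{k}\subset\Gamma_{2}$. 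With these two points patched, your proof is complete.
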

 Recently, Li-Lu \cite{ll} characterized the existence and nonexistence of solutions in terms of the asymptotic behavior to the exterior Dririchlet problem with the right hand side being $1$ or the perturbation of $1$ at infinity. In this paper, we obtain the necessary and sufficient conditions of existence of radial solutions to \eqref{ns1} and \eqref{ns2}. We suppose that
 \begin{equation}\label{c*}\mbox{there exists a constant}\ c_*\ \mbox{such that} \ f(c_*,\dots,c_*)=1.\end{equation}
 Let $\Omega$ be a domain in $\mathbb{R}^n$. A function $u\in C^2(\Omega)$ is called admissible  if at each $x\in \Omega$, $\lambda(D^2u(x))\in \Gamma.$ The condition \eqref{f} guarantees that \eqref{ns1} is elliptic for the admissible functions. Set
 $$\Phi=\{u\in C^1(\mathbb{R}^n\b B_1)\cap C^2(\mathbb{R}^n\b \o{B_1})|u\ \mbox{is an admissible radially symmetric function.}\}$$

The main result is the following.
\begin{theorem}\label{thmns1} Let $n\geq 3$ and \eqref{c*} hold. There exists a unique function $u\in \Phi$ satisfying \eqref{ns1}, \eqref{ns2} and
\begin{equation}\label{ns3}u(x)=\dfrac{c_*}{2}|x|^2+c+O(|x|^{2-n})\end{equation}
if and only if $c\in [c_0,+\infty)$ where $c_0=\mu(W^{-1}_s(\gamma_0))+b-c_*/2$, $0\leq \gamma_0<c_*$, \begin{equation}\label{mualpha}\mu(\alpha)=\dint_1^{+\infty}s(W(s,\alpha)-c_*)ds\end{equation}
and $W_s(\alpha)=W(s,\alpha)$ satisfies \eqref{W} and \eqref{Wboun}.
\end{theorem}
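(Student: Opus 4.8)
The plan is to use the radial symmetry to collapse \eqref{ns1}--\eqref{ns2} into a scalar first-order ODE, solve it in a one-parameter family, and then read off the admissible asymptotic constants. First I would write $u=u(r)$, $r=|x|$: the Hessian of a radial function has the simple eigenvalue $u''(r)$ in the radial direction and the eigenvalue $u'(r)/r$ with multiplicity $n-1$ in the tangential directions, so \eqref{ns1} becomes
$$f\left(u''(r),\frac{u'(r)}{r},\dots,\frac{u'(r)}{r}\right)=1.$$
Since $f_{\lambda_1}>0$ by \eqref{f}, the implicit function theorem defines a function $g$ with $f(g(\tau),\tau,\dots,\tau)=1$ and $(g(\tau),\tau,\dots,\tau)\in\Gamma$ on the admissible range of the tangential eigenvalue $\tau$. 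Putting $w(r)=u'(r)/r$ reduces the equation to the separable ODE \eqref{W}, namely $r\,w'(r)=g(w(r))-w(r)$. By \eqref{c*} one has $g(c_*)=c_*$, so $w\equiv c_*$ is an equilibrium; differentiating the defining relation of $g$ and using the symmetry of $f$ at the diagonal point $(c_*,\dots,c_*)$ gives $g'(c_*)=-(n-1)$, so the linearization $r\,w'\approx-n\,(w-c_*)$ forces $w(r)-c_*=O(r^{-n})$. Integrating twice recovers exactly the asymptotic form \eqref{ns3}, and the decay rate $r^{-n}$ is integrable against $s\,ds$ precisely when $n\geq 3$. Moreover $g'<0$ (again because every $f_{\lambda_j}>0$), so $h:=g-\mathrm{id}$ is strictly decreasing with its only zero at $c_*$; hence every admissible trajectory is monotone in $r$ and converges to $c_*$, which both justifies the global dynamics and pins down the asymptotics.

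Next I would parametrize the solutions. Writing $W(s,\alpha)$ for the solution of \eqref{W} selected by the normalization \eqref{Wboun}, every admissible radial solution of \eqref{ns1} satisfies $u'(s)=s\,W(s,\alpha)$ for some parameter $\alpha$, and enforcing \eqref{ns2} fixes $u(r)=b+\int_1^r s\,W(s,\alpha)\,ds$. Splitting off the leading term,
$$u(r)=\frac{c_*}{2}r^2+\left(b-\frac{c_*}{2}+\int_1^r s\,(W(s,\alpha)-c_*)\,ds\right),$$
and letting $r\to\infty$ identifies the asymptotic constant as $c=c(\alpha)=\mu(\alpha)+b-c_*/2$, with $\mu$ given by \eqref{mualpha}; convergence of that integral is exactly the $O(|x|^{2-n})$ statement from the previous step. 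The whole problem therefore reduces to computing the range of $\mu(\alpha)$ as $\alpha$ runs over those values for which $W(\cdot,\alpha)$ stays admissible on $[1,\infty)$.

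Finally I would analyze this range by monotonicity. The scalar comparison principle for $r\,w'=g(w)-w$ makes $\alpha\mapsto W(s,\alpha)$ order-preserving for each fixed $s$, so $\mu$ is strictly increasing; in particular $\mu$ vanishes at the constant profile $W\equiv c_*$, recovering the trivial solution $\tfrac{c_*}{2}|x|^2$. As $\alpha$ grows one checks $\mu(\alpha)\to+\infty$, while the admissible parameters are bounded below by the extremal value $\alpha_0=W_s^{-1}(\gamma_0)$, at which the tangential-eigenvalue profile reaches the admissibility threshold $\gamma_0\in[0,c_*)$ of the cone $\Gamma$ at the inner boundary $r=1$. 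This gives $\mu(\alpha)\in[\mu(\alpha_0),+\infty)$, i.e. $c\in[c_0,+\infty)$ with $c_0=\mu(W_s^{-1}(\gamma_0))+b-c_*/2$; strict monotonicity of $c(\alpha)$ yields uniqueness for each admissible $c$, and conversely any radial solution of \eqref{ns1}--\eqref{ns3} furnishes an admissible $\alpha$, so no $c<c_0$ is attainable, proving necessity. I expect the main obstacle to be the extremal case $\alpha=\alpha_0$, which makes the interval closed on the left: there $g(w)\to+\infty$ as $w\to\gamma_0^+$, so $u''(r)$ blows up as $r\to1^+$, and one must verify that the profile can still be integrated from the singular boundary point, that $\mu(\alpha_0)$ remains finite (the singularity of $w'$ at $s=1$ is integrable because $W$ itself stays bounded in $[\gamma_0,c_*)$), and that the resulting $u$ still belongs to $\Phi$, being $C^1$ up to $\partial B_1$ with $u'(1)=\gamma_0$ and $C^2$ in the open exterior $\mathbb{R}^n\setminus\overline{B_1}$.
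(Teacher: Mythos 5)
Your proposal follows essentially the same route as the paper: reduce to the radial ODE via the eigenvalues $u''$ and $u'/r$, define $g$ implicitly by $f(g(\tau),\tau,\dots,\tau)=1$, study the flow $rW'=g(W)-W$ with equilibrium $c_*$, and use strict monotonicity of $W(s,\cdot)$ (hence of $\mu$) together with the admissibility threshold $\gamma_0$ to identify the range of $c$ as $[\mu(\gamma_0)+b-c_*/2,+\infty)$. The only differences are in presentation: where the paper cites Proposition 2.1 of \cite{bl-2} for the expansion \eqref{asymp} and for the monotonicity and divergence of $\mu$, you sketch these directly via the linearization $g'(c_*)=1-n$ giving $W-c_*=O(r^{-n})$, and you explicitly flag the singular endpoint case $\alpha=\gamma_0$ (where $u''\to+\infty$ as $r\to 1^+$ but $u$ remains in $\Phi$), a point the paper passes over silently.
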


\section{Proof of Theorem \ref{thmns1}}

\begin{lemma}(\cite{wb})\label{lem-w}
Let $\lambda=(\beta,\gamma,\dots,\gamma)\in \Gamma.$ Then $\gamma>0.$
\end{lemma}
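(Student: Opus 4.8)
The plan is to argue by contradiction: assume $\lambda=(\beta,\gamma,\dots,\gamma)\in\Gamma$ with $\gamma\le 0$ and derive a violation of the inclusion \eqref{Gamma}. The guiding observation is that, although the upper inclusion literally bounds only the full trace $\sum_{j}\lambda_j$, the symmetry and convexity of $\Gamma$ should promote this to positivity of every ``leave-one-out'' sum $\sum_{i\ne j}\mu_i$ on $\Gamma$. Granting that, I would apply it with the distinguished entry $\beta$ deleted: the surviving $n-1$ entries are all equal to $\gamma$, so their sum $(n-1)\gamma$ must be positive, giving $\gamma>0$. Thus the entire lemma reduces to establishing leave-one-out positivity and then specializing to our highly symmetric vector.

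For the model cones $\Gamma=\Gamma_k$ with $k\ge 2$ I would first do the specialization directly, which already exhibits the mechanism. With one entry $\beta$ and $n-1$ entries $\gamma$,
\begin{equation*}
\sigma_1(\lambda)=\beta+(n-1)\gamma,\qquad \sigma_2(\lambda)=(n-1)\beta\gamma+\frac{(n-1)(n-2)}{2}\gamma^2=(n-1)\gamma\Big(\beta+\frac{n-2}{2}\gamma\Big).
\end{equation*}
If $\gamma<0$, then $\lambda\in\Gamma\subseteq\{\sigma_1>0\}$ forces $\beta>(n-1)|\gamma|$, whence $\beta+\frac{n-2}{2}\gamma>\frac{n}{2}|\gamma|>0$; the bracket being positive and $\gamma$ negative makes $\sigma_2(\lambda)<0$, contradicting $\lambda\in\Gamma\subseteq\Gamma_2$. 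The boundary case $\gamma=0$ gives $\sigma_2(\lambda)=0$, again incompatible with the strict defining inequality of the open cone. Hence $\gamma>0$.

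To handle a general admissible cone $\Gamma$ by the same route, I would prove leave-one-out positivity intrinsically. Ordering $\mu_1\ge\cdots\ge\mu_n$, the claim $\sum_{i\ne j}\mu_i>0$ reduces to $\sum_{i\ge 2}\mu_i>0$. For $\Gamma_k$ with $k\ge 2$ this follows by feeding the Cauchy--Schwarz bound $\sigma_2(\mu_2,\dots,\mu_n)\le\frac{n-2}{2(n-1)}\big(\sum_{i\ge 2}\mu_i\big)^2$ into the identity $\sigma_2(\mu)=\mu_1\sum_{i\ge 2}\mu_i+\sigma_2(\mu_2,\dots,\mu_n)$ and using $\sigma_1(\mu)>0$: if the tail sum were $\le 0$ one would again be forced into $\sigma_2(\mu)<0$. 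Applying this with $\mu=\lambda$ and $j$ the index of $\beta$ recovers $(n-1)\gamma>0$.

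The main obstacle is exactly this upgrade, and it is where one must use more than the literal statement of \eqref{Gamma}: the trace bound $\sum_j\lambda_j>0$ alone is insufficient --- it fails already for $\Gamma=\Gamma_1$, where $(\beta,\gamma,\dots,\gamma)$ with large $\beta$ and $\gamma<0$ lies in the cone --- so the proof must genuinely invoke that the cones at hand sit inside $\Gamma_2$ (equivalently, carry the quadratic information of $\sigma_2$, or an equivalent convexity estimate). Once leave-one-out positivity is secured, the conclusion $\gamma>0$ is immediate, and openness of $\Gamma$ rules out the degenerate equality $\gamma=0$.
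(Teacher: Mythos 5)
First, a point of comparison: this paper contains no proof of Lemma \ref{lem-w} at all; it is imported by citation from \cite{wb}, where the statement and its proof concern the Hessian cones $\Gamma=\Gamma_k$, $2\le k\le n$. Your opening computation is precisely that argument: for $\lambda=(\beta,\gamma,\dots,\gamma)$ one has $\sigma_1=\beta+(n-1)\gamma$ and $\sigma_2=(n-1)\gamma\bigl(\beta+\frac{n-2}{2}\gamma\bigr)$; if $\gamma<0$ then $\sigma_1>0$ forces $\beta>(n-1)|\gamma|$, hence the bracket is positive and $\sigma_2<0$, while $\gamma=0$ gives $\sigma_2=0$, both contradicting $\lambda\in\Gamma_k\subseteq\Gamma_2$. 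So on the cones to which the citation actually applies, your proof is correct and is essentially the same proof; your leave-one-out strengthening (every $(n-1)$-fold partial sum is positive on $\Gamma_2$) is likewise a correct, standard fact.

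The obstacle you flag at the end, however, deserves to be stated more strongly: it is not a gap in your argument but a defect in the lemma as formulated in this paper. The statement here is made for an arbitrary cone $\Gamma$ subject only to \eqref{Gamma}, and in that generality it is simply false: the pair $(f,\Gamma)=(\sigma_1/n,\Gamma_1)$ satisfies every standing hypothesis of the paper (\eqref{Gamma}, \eqref{f}, concavity, symmetry, and \eqref{c*} with $c_*=1$), yet for $n\ge3$ the vector $(n,-\tfrac12,\dots,-\tfrac12)$ lies in $\Gamma_1$ and has $\gamma=-\tfrac12<0$. Consequently no proof from \eqref{Gamma} alone can exist, and the hypothesis you are forced to add --- $\Gamma\subseteq\Gamma_2$, or directly the leave-one-out positivity --- is exactly what is missing from the paper's formulation (it does hold for the cones $\Gamma_k$, $k\ge2$, of \cite{wb}). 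Your proposal is complete and correct once that hypothesis is made explicit; I would simply promote your closing caveat from an aside to an assumption, since as written your proof establishes slightly less than the lemma states, while the lemma as stated claims more than is true.
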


\begin{lemma}\label{lemma1}
Let $\lambda=(\beta,\gamma,\dots,\gamma)\in \mathbb{R}^{n}, n\geq 2$ and $f(\lambda)=1.$ Then $\lambda\in \Gamma$ if and only if there exists a constant $0\leq\gamma_0<c_*$ such that \begin{equation}\label{gamma}\gamma_0<\gamma<+\infty.\end{equation}
\end{lemma}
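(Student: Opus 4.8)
The plan is to reduce the problem to a two-dimensional slice of the cone $\Gamma$ and to analyse how the single constraint $f=1$ cuts this slice. First I would introduce, for $\lambda$ of the prescribed form, the section $C=\{(\beta,\gamma)\in\mathbb{R}^2 : (\beta,\gamma,\dots,\gamma)\in\Gamma\}$ together with the restricted function $F(\beta,\gamma)=f(\beta,\gamma,\dots,\gamma)$. Because $\Gamma$ is an open convex cone containing the positive orthant and contained in $\{\sum_j\lambda_j>0\}$ by \eqref{Gamma}, the set $C$ is an open convex cone in the plane; Lemma \ref{lem-w} gives $C\subset\{\gamma>0\}$, while $\beta>-(n-1)\gamma$ holds on $C$. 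The monotonicity $f_{\lambda_j}>0$ from \eqref{f} shows that, for each fixed $\gamma>0$, the $\beta$-section of $C$ is a half-line $(\underline\beta(\gamma),+\infty)$ with $\underline\beta(\gamma)\in[-(n-1)\gamma,0]$, on which $\beta\mapsto F(\beta,\gamma)$ is strictly increasing; since $f\in C^0(\overline\Gamma)$ and $f=0$ on $\partial\Gamma$, we have $F(\beta,\gamma)\to0$ as $\beta\downarrow\underline\beta(\gamma)$. Hence $M(\gamma):=\lim_{\beta\to+\infty}F(\beta,\gamma)\in(0,+\infty]$ exists, and the equation $f(\beta,\gamma,\dots,\gamma)=1$ has a (necessarily unique) root $\beta$ with $(\beta,\gamma,\dots,\gamma)\in\Gamma$ precisely when $M(\gamma)>1$.

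The next step is to understand the set $V=\{\gamma>0 : M(\gamma)>1\}$. Monotonicity of $f$ in the repeated variables shows $M$ is non-decreasing in $\gamma$, so $V$ is an interval unbounded above; set $\gamma_0=\inf V\ge0$. To locate $\gamma_0$ relative to $c_*$ I would use \eqref{c*}: since $f(c_*,\dots,c_*)=1$ and $F(\cdot,c_*)$ is strictly increasing, $M(c_*)>1$, so $c_*\in V$; moreover for $\gamma>c_*$ one has $F(\gamma,\gamma)=f(\gamma,\dots,\gamma)>1$, whence $M(\gamma)>1$ and $(c_*,+\infty)\subset V$. Thus $\gamma_0\le c_*$ and $V$ is nonempty.

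The heart of the argument is to show that $V$ is exactly the open half-line $(\gamma_0,+\infty)$ and that the inclusion $\gamma_0<c_*$ is strict. Both follow from the implicit function theorem applied to $F$: at any interior point $(\beta,\gamma)$ of $C$ with $F=1$ one has $F_\beta=f_{\lambda_1}>0$, so the level set $\{F=1\}$ is locally a graph $\beta=\beta(\gamma)$ with $(\beta(\gamma),\gamma,\dots,\gamma)$ remaining in the open cone $\Gamma$ for $\gamma$ near the base point. Applying this at the diagonal point $(c_*,c_*)$ produces admissible solutions with $\gamma$ slightly below $c_*$, giving $\gamma_0<c_*$. Applying it at a hypothetical solution with $\gamma=\gamma_0>0$ would produce solutions with $\gamma<\gamma_0$, contradicting the definition of the infimum; hence $\gamma_0\notin V$ and $V=(\gamma_0,+\infty)$. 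I expect this endpoint analysis — excluding $\gamma_0$ itself and pinning the left end open — to be the main obstacle, since it is where one must combine the monotone-limit description of $M$ with the local graph structure rather than rely on continuity of $M$, which is not granted a priori.

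Finally I would assemble the equivalence. Interpreting $f(\lambda)=1$ on the domain $\overline\Gamma$ of $f$, the relation $f=1>0=f|_{\partial\Gamma}$ already forces any such $\lambda$ into the interior, and strict monotonicity of $F$ in $\beta$ makes the root with prescribed $\gamma$ unique; thus $\lambda=(\beta,\gamma,\dots,\gamma)\in\Gamma$ with $f(\lambda)=1$ occurs if and only if $\gamma\in V=(\gamma_0,+\infty)$, with $0\le\gamma_0<c_*$. This is precisely \eqref{gamma}, which completes the proof.
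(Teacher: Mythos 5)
Your proof is correct, but it takes a genuinely different route from the paper's. The paper parametrizes the level set $\{f=1\}$ in the slice as a curve $\beta=g(\gamma)$: it defines $g$ for $\gamma\geq c_*$ by the intermediate value theorem, differentiates $f(g(\gamma),\gamma,\dots,\gamma)=1$ to get the ODE $d\beta/d\gamma=-\sum_{j\geq 2}f_{\lambda_j}/f_{\lambda_1}<0$, and invokes the ODE extension (maximal continuation) theorem to extend $g$ to a maximal interval $(\gamma_0,+\infty)$ with $g\to+\infty$ at the left endpoint; concavity of $f$ then yields \eqref{gconvex}, \eqref{gc*} and the bound \eqref{j1}. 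You avoid ODE theory entirely: for each fixed $\gamma$ you study the supremum $M(\gamma)$ of the strictly increasing map $\beta\mapsto f(\beta,\gamma,\dots,\gamma)$, note that the root set $V=\{\gamma>0: M(\gamma)>1\}$ is upward-closed by monotonicity of $f$ in the repeated entries (this step tacitly uses that adding $(0,t,\dots,t)$, $t\geq 0$, keeps a point of $\Gamma$ in $\Gamma$, i.e.\ $\Gamma+\overline{\Gamma}\subset\Gamma$; worth stating), and apply the implicit function theorem twice to get both the strict inequality $\gamma_0<c_*$ and the exclusion of the endpoint $\gamma_0$ from $V$. Your version is more elementary and in one respect tighter: the upward-closedness of $V$ directly rules out roots with $\gamma\leq\gamma_0$, a point the paper handles only implicitly by identifying the root set with the domain of the ODE solution through $(c_*,c_*)$; your observation that the ``if'' direction is automatic (any $\lambda$ in the domain of $f$ with $f(\lambda)=1$ must lie in $\Gamma$ because $f=0$ on $\partial\Gamma$) is also cleaner than the paper's closing sentence. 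What the paper's route buys is the function $g$ and its quantitative properties --- \eqref{gprime}, \eqref{gc*}, \eqref{gconvex}, and the blow-up of $g$ at $\gamma_0$ --- which are not by-products of your argument but are needed later: the ODE \eqref{W} in Lemma \ref{lemW} is written in terms of $g$, and \eqref{gconvex}, \eqref{gc*} are cited in the proof of Lemma \ref{lemma2}. So if your proof were substituted for the paper's, those facts would still have to be established separately (e.g.\ by applying your implicit-function-theorem step globally to define $g$ on $(\gamma_0,+\infty)$ and then differentiating).
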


\begin{proof}
Consider the equation
\begin{equation}\label{f-equ}
f(\beta,\gamma,\dots,\gamma)=1\ \mbox{for}\ (\beta,\gamma,\dots,\gamma)\in \Gamma.
\end{equation}
Similar to \cite{bl-2}, from \eqref{f} and \eqref{c*}, we know that
$$f(c_*,\gamma,\dots,\gamma)>1,\ \mbox{if} \ \gamma>c_*.$$
By \eqref{f}, for $(\beta_0,\gamma,\dots,\gamma)\in \partial \Gamma$,
$$f(\beta_0,\gamma,\dots,\gamma)=0.$$
Hence from the intermediate value theorem and \eqref{f}, for each $\gamma>c_*$, there exists a unique $g(\gamma)$ between $\beta_0$ and $c_*$ such that $(g(\gamma),\gamma,\dots,\gamma)\in \Gamma,$ and
\begin{equation}\label{f-g}f(g(\gamma),\gamma,\dots,\gamma)=1,\ \forall\ \gamma>c_*.\end{equation}
Then we can define the continuous and differentiable function $g$ such that
\begin{equation*}\beta=g(\gamma),\ \gamma\geq c_*\end{equation*}
and $$g(c_*)=c_*.$$
Moreover, differentiating \eqref{f-g} with respect to $\gamma$, we get
\begin{equation}\label{g2prime}f_{\lambda_1}(g(\gamma),\gamma,\dots,\gamma)g^{\prime}(\gamma)
+\sum_{j=2}^{n}f_{\lambda_j}(g(\gamma),\gamma,\dots,\gamma)=0,\ \forall\ \gamma\geq c_*.\end{equation}
So by \eqref{f},
\begin{equation}\label{gprime}\frac{d\beta}{d\gamma}=g^{\prime}(\gamma)=-\frac{\sum_{j=2}^{n}f_{\lambda_j}(g(\gamma),\gamma,\dots,\gamma)}
{f_{\lambda_1}(g(\gamma),\gamma,\dots,\gamma)}<0.\end{equation}
In particular, by the symmetry of $f$, we can deduce that
\begin{equation}\label{gc*}
g^{\prime}(c_*)=1-n.
\end{equation}
Let
$$F(\beta,\gamma,\dots,\gamma)=-\frac{\sum_{j=2}^{n}f_{\lambda_j}(\beta,\gamma,\dots,\gamma)}
{f_{\lambda_1}(\beta,\gamma,\dots,\gamma)}.$$
Then
$$\frac{d\beta}{d\gamma}=F(\beta,\gamma,\dots,\gamma).$$
Since $\partial F/\partial \beta$ is continuous, then by the extension theorem of ODE, $\beta=g(\gamma),\gamma>c_*$ can be extended to the left of $c_*$. And because $g^{\prime}(\gamma)<0$ and due to Lemma \ref{lem-w}$, \gamma>0$, then $\beta=g(\gamma)$ can be extended to $\gamma_0$, $0\leq \gamma_0<c_*$ such that $\lim_{\gamma\to\gamma_0^+}g(\gamma)=+\infty.$ Then the maximum existence interval of $\beta=g(\gamma)$ is $(\gamma_0,+\infty),0\leq \gamma_0<c_*$, that is,
\begin{equation}\label{g}\beta=g(\gamma),\ \gamma\in(\gamma_0,+\infty).\end{equation}
On the other hand, differentiating \eqref{g2prime} with respect to $\gamma$, we have that
$$g^{\prime\prime}(\gamma)=-\dfrac{\Lambda^T\left(\frac{\partial^2f}{\partial\lambda_i\partial\lambda_j}\right)\Lambda}{f_{\lambda_1}},$$
where $\Lambda^T=(g^{\prime}(\gamma),1,\dots,1).$ Since $f$ is concave and $f_{\lambda_1}>0$, then
\begin{equation}\label{gconvex}g^{\prime\prime}(\gamma)>0.
\end{equation}
Then
\begin{equation}\label{j2}g^{\prime}(\gamma)>g^{\prime}(c_*)=1-n\ \mbox{for}\ \gamma>c_*.
\end{equation}
Since $\lim_{\gamma\to \gamma_0^+}g(\gamma)=+\infty,$ then we declare that
\begin{equation}\label{j1}
g(\gamma)>(1-n)\gamma, \gamma_0<\gamma<+\infty.
\end{equation}
On the contrary, since $g^{\prime}(\gamma)<0$ and $g(c_*)=c_*$, then there exists some $\bar{\gamma}\in (c_*,+\infty)$ such that $g(\bar{\gamma})=(1-n)\bar{\gamma}$ and $g(\gamma)<(1-n)\gamma$ for $\gamma>\bar{\gamma}.$ So
\begin{align*}
g^{\prime}(\bar{\gamma})&=\lim_{\gamma\to \bar{\gamma}^+}\dfrac{g(\gamma)-g(\bar{\gamma})}{\gamma-\bar{\gamma}}\\
&\leq\lim_{\gamma\to \bar{\gamma}^+}\dfrac{(1-n)\gamma-(1-n)\bar{\gamma}}{\gamma-\bar{\gamma}}\\
&=1-\gamma
\end{align*}
which contradicts with \eqref{j2}. Hence \eqref{j1} holds and then $g(\gamma)+(n-1)\gamma>0$. Moreover, if $\gamma_0<\gamma<+\infty$, then $f(g(\gamma),\gamma,\dots,\gamma)=1$ and $g(\gamma)$ may be positive. So $(g(\gamma),\gamma,\dots,\gamma)\in \Gamma.$

The Lemma is proved.
\end{proof}

\begin{lemma}\label{lemW}
Let $\alpha>0$ and $g$ be the same function as \eqref{g}. Then the problem
\begin{align}\label{W}
\dfrac{dW}{dr}&=\dfrac{g(W)-W}{r},\ r>1,\\
\label{Wboun}W(1)&=\alpha
\end{align}
has a unique solution $W=W(r,\alpha)$ and
\begin{equation}\label{Wlimit}\lim_{r\to\infty}W(r,\alpha)=c_*.\end{equation}
\end{lemma}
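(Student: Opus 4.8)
The plan is to treat \eqref{W}--\eqref{Wboun} as a single first-order ODE and to extract existence, uniqueness, and the asymptotics \eqref{Wlimit} from the qualitative behaviour of its right-hand side. First I would record the properties of $h(w):=g(w)-w$ that are already available from Lemma \ref{lemma1}. Since $g$ is smooth on $(\gamma_0,+\infty)$ with $g(c_*)=c_*$, we have $h(c_*)=0$; and since $g'(w)<0$ by \eqref{gprime}, we get $h'(w)=g'(w)-1<-1<0$. Thus $h$ is strictly decreasing on $(\gamma_0,+\infty)$ and vanishes only at $w=c_*$, so that $h(w)>0$ for $\gamma_0<w<c_*$ and $h(w)<0$ for $w>c_*$. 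In particular $W\equiv c_*$ is the unique constant solution, and this sign information is the engine for the whole proof.

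For existence and uniqueness I would invoke the standard Cauchy--Lipschitz theory. The right-hand side $F(r,w)=h(w)/r$ is $C^1$ in $w$ and continuous in $r$ on the strip $\{r>1,\ w\in(\gamma_0,+\infty)\}$, hence locally Lipschitz in $w$, so Picard--Lindel\"of yields a unique maximal solution through $(1,\alpha)$ (with $\alpha$ taken in the domain $(\gamma_0,+\infty)$ of $g$). The substantive point is global existence on all of $(1,+\infty)$, which I would obtain from a priori bounds coming from monotonicity: by the sign of $h$, if $\alpha>c_*$ then $W$ is strictly decreasing, while if $\gamma_0<\alpha<c_*$ then $W$ is strictly increasing, and in either case uniqueness forbids the trajectory from crossing the equilibrium $c_*$. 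Hence $W(r)$ remains in the closed interval with endpoints $\alpha$ and $c_*$; it therefore stays bounded and bounded away from the singular boundary $\gamma_0$, so the maximal solution can neither blow up nor leave the domain of $g$ and must extend to all $r\in(1,+\infty)$.

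Finally, for \eqref{Wlimit}, monotonicity together with boundedness gives $\lim_{r\to\infty}W(r,\alpha)=:L$ lying between $\alpha$ and $c_*$, and it remains only to show $L=c_*$. The cleanest route is separation of variables: away from $c_*$ the equation reads $dW/h(W)=dr/r$, so integrating from $1$ to $r$ gives $\int_\alpha^{W(r)}\frac{ds}{h(s)}=\ln r$. As $r\to\infty$ the right side diverges to $+\infty$; but if $L\neq c_*$ then $h$ is bounded away from $0$ on the bounded range of $W$, which would keep the left side finite, a contradiction. Hence $L=c_*$. (Equivalently, if $L\neq c_*$ then $h(W(r))\to h(L)\neq0$, so $W'(r)\sim h(L)/r$ forces $W$ to grow like $h(L)\ln r$, again contradicting boundedness.) This last step---upgrading convergence of the monotone bounded solution to convergence to the \emph{specific} value $c_*$---is the only genuinely non-mechanical part; everything else is routine ODE theory applied to the monotone structure inherited from Lemma \ref{lemma1}.
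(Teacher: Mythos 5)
Your proposal is correct and follows essentially the same route as the paper: sign analysis of $g(W)-W$ about the equilibrium $c_*$ (using $g'<0$ and $g(c_*)=c_*$), local Lipschitz continuity of the right-hand side for Picard--Lindel\"of uniqueness, and monotone confinement of the trajectory between $\alpha$ and $c_*$. The only difference is one of completeness: where the paper disposes of \eqref{Wlimit} by appealing to ``the extension theorem'' and a citation of Lemma 2.2 in \cite{bl-2}, you supply the actual argument (separation of variables, $\int_\alpha^{W(r)} ds/(g(s)-s)=\ln r$, and the contradiction if the limit $L\neq c_*$), which is a genuine improvement in rigor rather than a different method.
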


\begin{proof}
If $W>c_*,$ then by \eqref{gprime}, we have $g(W)<g(c_*)=c_*$. Thus $g(W)-W<0$, that is, $dW/dr<0.$

If $W<c_*,$ then by \eqref{gprime}, we have $g(W)>g(c_*)=c_*$. Thus $g(W)-W>0$, that is, $dW/dr>0.$

Let $G(W,r)=\frac{g(W)-W}{r}$, then $\partial G/\partial W$ is continuous. In addition, we know that $W=c_*$ is a special solution of \eqref{W}. Hence by the existence and uniqueness theorem of solutions to the ODE equation, we know that \eqref{W} and \eqref{Wboun} has a unique solution $W=W(r,\alpha)$. Then by the extension theorem of solutions, we know that \eqref{Wlimit} holds.

The Lemma is proved.
\end{proof}

\begin{remark}
For the proof of \eqref{Wlimit}, we can also refer to Lemma 2.2 in \cite{bl-2}.
\end{remark}
\begin{lemma}\label{lemma2} Let $u\in C^1(\mathbb{R}^{n}\b B_1)\cap C^2(\mathbb{R}^n\b\o{B_1})$ be a radial solution of \eqref{ns1} and \eqref{ns2} and
$$\alpha=u^{\prime}(1).$$
Then $\lambda(D^2u)\in \Gamma$ if and only if
\begin{equation}\label{alphainterval}\sup_{r\geq 1}W^{-1}_r(\gamma_0)\leq \alpha<+\infty,\end{equation} where $W_r(\alpha)=W(r,\alpha)$ satisfies \eqref{W} and \eqref{Wboun}.
\end{lemma}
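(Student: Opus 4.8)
The plan is to reduce the PDE–admissibility condition to a one–dimensional statement about the radial profile, identify that profile with the flow $W(\cdot,\alpha)$, and then read off the constraint on $\alpha=u'(1)$ from the monotone dependence of $W$ on its initial value. For a radial $u=u(r)$, $r=|x|$, the Hessian $D^2u$ has the single eigenvalue $u''(r)$ in the radial direction and the eigenvalue $u'(r)/r$ with multiplicity $n-1$ in the tangential directions, so $\lambda(D^2u)=(u''(r),u'(r)/r,\dots,u'(r)/r)$ is exactly of the form $(\beta,\gamma,\dots,\gamma)$ treated in Lemma~\ref{lemma1}, with $\beta=u''(r)$ and $\gamma=u'(r)/r$. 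Since $u$ solves \eqref{ns1}, at each $r>1$ we have $f(u''(r),u'(r)/r,\dots,u'(r)/r)=1$; by Lemma~\ref{lemma1}, $\lambda(D^2u(x))\in\Gamma$ at radius $r$ holds if and only if $u'(r)/r>\gamma_0$, and then $u''(r)=g(u'(r)/r)$ by the uniqueness of $g$ in \eqref{g}. Writing $\gamma(r)=u'(r)/r$ and differentiating $u'(r)=r\gamma(r)$ gives $u''(r)=\gamma(r)+r\gamma'(r)$; combining this with $u''=g(\gamma)$ yields $r\gamma'(r)=g(\gamma(r))-\gamma(r)$, i.e.\ $\gamma$ solves \eqref{W} with $\gamma(1)=u'(1)=\alpha$. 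By the uniqueness in Lemma~\ref{lemW}, $\gamma(r)=W(r,\alpha)$ wherever the profile remains admissible, so that $\lambda(D^2u)\in\Gamma$ on $\mathbb{R}^n\setminus\overline{B_1}$ is equivalent to $W(r,\alpha)>\gamma_0$ for every $r>1$.

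Next I would convert the pointwise requirement $W(r,\alpha)>\gamma_0$ into the single inequality \eqref{alphainterval}. The essential tool is the monotone dependence of the flow on its initial datum: for each fixed $r$, the map $\alpha\mapsto W(r,\alpha)$ is strictly increasing, since two solutions of \eqref{W} with distinct values at $r=1$ cannot cross (the uniqueness already used in Lemma~\ref{lemW}, $g$ being locally Lipschitz on $(\gamma_0,+\infty)$). Hence for each fixed $r$ the threshold $W_r^{-1}(\gamma_0)$ is well defined and $W(r,\alpha)\geq\gamma_0$ holds precisely when $\alpha\geq W_r^{-1}(\gamma_0)$. Imposing this simultaneously for all $r\geq 1$ gives $\alpha\geq\sup_{r\geq 1}W_r^{-1}(\gamma_0)$, which is \eqref{alphainterval}; the finiteness $\alpha<+\infty$ is automatic because $u\in C^1$. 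For the converse I would run these equivalences backward: given $\alpha$ in the stated range, set $\gamma=W(\cdot,\alpha)$ and $u(r)=b+\int_1^r s\,\gamma(s)\,ds$, and verify directly that $u\in\Phi$ solves \eqref{ns1}, \eqref{ns2} with $u'(1)=\alpha$ and $\lambda(D^2u)\in\Gamma$ throughout.

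The step I expect to be most delicate is the behavior of the flow near the singular level $\gamma=\gamma_0$, where $g(\gamma)\to+\infty$ by Lemma~\ref{lemma1}. I must ensure that $W(r,\alpha)$ does not avoid $\gamma_0$ merely by accident but that $\inf_{r\geq 1}W(r,\alpha)$ is genuinely controlled. Using $dW/dr>0$ when $W<c_*$ and $dW/dr<0$ when $W>c_*$ (Lemma~\ref{lemW}), the infimum is attained at $r=1$ when $\alpha\leq c_*$ and equals $c_*$ when $\alpha>c_*$, so $\inf_{r\geq 1}W(r,\alpha)=\min\{\alpha,c_*\}$; this pins down the binding constraint and shows that the pointwise condition is governed by the value at $r=1$. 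Reconciling this with the supremum $\sup_{r\geq 1}W_r^{-1}(\gamma_0)$, and in particular handling the limiting case in which the initial slope $u''(1)=g(\gamma_0)$ degenerates (so that $r=1$ lies on $\partial\Gamma$ while every $r>1$ lies in $\Gamma$), is where the argument must be made precise. The strict monotonicity in $\alpha$ and this singular–endpoint analysis are the two places needing genuine care; the remaining steps are routine.
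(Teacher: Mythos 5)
Your proposal is correct and follows essentially the same route as the paper's proof: identify the radial eigenvalues, use Lemma \ref{lemma1} to get $u''=g(u'/r)$ together with the constraint $u'/r>\gamma_0$, recognize $W=u'/r$ as the solution of \eqref{W}--\eqref{Wboun} with $W(1)=\alpha$, and convert the pointwise constraint into \eqref{alphainterval} via the strict monotonicity of $\alpha\mapsto W(r,\alpha)$. The only real difference is that you justify this monotonicity by non-crossing of ODE solutions, whereas the paper computes $\partial W/\partial\alpha$ explicitly from the linearized equation \eqref{partialW}; the borderline behavior near $\gamma_0$ that you flag as needing care is treated no more carefully in the paper's own proof.
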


\begin{proof}
Let $u(x)=u(|x|)=u(r)\in C^1(\mathbb{R}^{n}\b B_1)\cap C^2(\mathbb{R}^n\b\o{B_1})$ be a radial solution of \eqref{ns1} and \eqref{ns2}, then the eigenvalues of the Hessian matrix $D^2u$ are
\begin{equation}\label{eigen}\lambda_1=u^{\prime\prime},\lambda_2=\cdots=\lambda_n=\dfrac{u^{\prime}}{r}.\end{equation}
So \begin{equation}\label{f-ode}f\left(u^{\prime\prime},\frac{u^{\prime}}{r},\dots,\frac{u^{\prime}}{r}\right)=1.\end{equation}
By Lemma \ref{lemma1}, we have that
$$\gamma_0<\frac{u^{\prime}}{r}<+\infty.$$
Let $W(r)=u^{\prime}(r)/r,$
then \begin{equation}\label{Wscope}\gamma_0<W(r)<+\infty,\end{equation}
and
$$u^{\prime\prime}(r)=rW^{\prime}(r)+W(r).$$
On the other hand, by \eqref{f-ode} and \eqref{g}, we know that
$$u^{\prime\prime}(r)=g\left(\frac{u^{\prime}}{r}\right)=g(W(r)),\ \gamma_0<W(r)<+\infty.$$
So $W(r)=u^{\prime}(r)/r$ satisfies \eqref{W} and \eqref{Wboun}. In the following, we denote $W(r)=W(r,\alpha)=W_r(\alpha)$.

Differentiating \eqref{W} and \eqref{Wboun} with respect to $\alpha$, we know that $V=\partial W(r,\alpha)/\partial \alpha$ satisfies
\begin{equation}
\begin{cases}
\dfrac{\partial V}{\partial r}=\left(\dfrac{g^{\prime}(W(r,\alpha))-1}{r}\right)V,\ r>1,\\
V(1)=1.
\end{cases}
\end{equation}
Then
\begin{equation}\label{partialW}\dfrac{\partial W(r,\alpha)}{\partial \alpha}=\exp\dint_1^r\dfrac{g^{\prime}(W(t,\alpha))-1}{t}dt.\end{equation}
And then $W(r,\alpha)$ is strictly increasing in $\alpha$. Next we prove that
\begin{equation}\label{Walpha}W(r,\alpha)\to +\infty,\ \mbox{as}\ \alpha\to+\infty.\end{equation}
Indeed, if $\alpha\to +\infty,$ that is, $W(1)\to+\infty,$ as the proof of Lemma \ref{lemW}, we can know that $W(r,\alpha)>c_*.$ Hence by \eqref{gconvex} and \eqref{gc*}, we obtain that
$$g^{\prime}(W(r,\alpha))>g^{\prime}(c_*)=1-n.$$
Then by \eqref{partialW}, we get that
$$\dfrac{\partial W(r,\alpha)}{\partial \alpha}>r^{-n}.$$
And thus $W(r,\alpha)>\alpha r^{-n}+W(r,0)$. So \eqref{Walpha} holds.

Since $W(r,\alpha)$ is strictly increasing in $\alpha$, then $W^{-1}_r(\alpha)$ exists, and from \eqref{Wscope}, we know that \eqref{alphainterval} holds.

On the other hand, if \eqref{alphainterval} holds, then \eqref{Wscope} holds, by Lemma \ref{lemma1}, we know that $\lambda(D^2u)\in\Gamma.$
\end{proof}

\begin{proof}[Proof of Theorem \ref{thmns1}]
Due to the Proposition 2.1 in \cite{bl-2},
\begin{equation}\label{asymp}u(x)=\dfrac{c_*}{2}|x|^2+\mu(\alpha)+b-\frac{c_*}{2}+O(|x|^{2-n}),\end{equation}
where $\mu(\alpha)$ is the same as \eqref{mualpha}. Moreover, by the Proposition 2.1 in \cite{bl-2}, $\mu(\alpha)$ is strictly increasing in $\alpha$ and $$\lim_{\alpha\to+\infty}\mu(\alpha)=+\infty.$$
Then by Lemmas \ref{lemma1} and \ref{lemma2}, we know that Theorem \ref{thmns1} is true.
\end{proof}

\noindent{\bf{\large Acknowledgements}}

Dai is supported by Shandong Provincial Natural Science Foundation (ZR2021MA054). Bao is supported by NSF of China (11871102). Wang is supported by NSF of China (11701027) and Beijing Institute of Technology Research Fund Program for Young Scholars.

(L.M. Dai)  School of Mathematics and Information Science, Weifang
 University, Weifang, 261061, P. R. China

 {\it Email address}: lmdai@wfu.edu.cn
 \vspace{3mm}

(J.G. Bao)School of Mathematical Sciences, Beijing Normal University,
Laboratory of Mathematics and Complex Systems, Ministry of
Education, Beijing, 100875, P. R. China

{\it Email address}: jgbao@bnu.edu.cn
\vspace{3mm}

(B. Wang)School of Mathematics and Statistics, Beijing Institute of Technology, Beijing, 100081, P. R. China

{\it Email address}: wangbo89630@bit.edu.cn

\end{document}